\theoremstyle{plain}
\newtheorem{thm}{Theorem}
\newtheorem{lemma}[thm]{Lemma}
\newtheorem{prop}[thm]{Proposition}
\newtheorem{cor}[thm]{Corollary}
\theoremstyle{definition}
\theoremstyle{remark}
\newtheorem{remark}[thm]{Remark}
\newcommand{\nc}{\newcommand}
\def\makeop#1{\expandafter\def\csname#1\endcsname
  {\mathop{\rm #1}\nolimits}\ignorespaces}
\def\makebb#1{\expandafter\def
  \csname bb#1\endcsname{{\mathbb{#1}}}\ignorespaces}
\def\makebf#1{\expandafter\def\csname bf#1\endcsname{{\bf
      #1}}\ignorespaces} 
\def\makegr#1{\expandafter\def
  \csname gr#1\endcsname{{\mathfrak{#1}}}\ignorespaces}
\def\makescr#1{\expandafter\def
  \csname scr#1\endcsname{{\EuScript{#1}}}\ignorespaces}
\def\makecal#1{\expandafter\def\csname cal#1\endcsname{{\mathcal
      #1}}\ignorespaces} 
\def\doLetters#1{#1A #1B #1C #1D #1E #1F #1G #1H #1I #1J #1K #1L #1M
                 #1N #1O #1P #1Q #1R #1S #1T #1U #1V #1W #1X #1Y #1Z}
\def\doletters#1{#1a #1b #1c #1d #1e #1f #1g #1h #1i #1j #1k #1l #1m
                 #1n #1o #1p #1q #1r #1s #1t #1u #1v #1w #1x #1y #1z}
     \def\qed{\qedmark\medbreak}%
\def\qedmark{{\enspace\vrule height 6pt width 5pt depth 1.5pt}}%
\def\Fp{{\bbF}_p}
\newcommand{\Z}{\mathbb Z}
\newcommand{\Q}{\mathbb Q}
\newcommand{\R}{\mathbb R}
\newcommand{\C}{\mathbb C}
\renewcommand{\H}{\mathbb H}  
\newcommand{\A}{\mathbb A}    
\newcommand{\npr}{\noindent }
\newcommand{\isoto}{\stackrel{\sim}{\longrightarrow}}
\nc{\embed}{\hookrightarrow}
\newcommand{\dieu}{Dieudonn\'{e} }
\nc{\ol}{\overline}
\nc{\wt}{\widetilde}
\nc{\opp}{\mathrm{opp}}
\def\ul{\underline}
\def\wh{\widehat}
\begin{document}
\renewcommand{\thefootnote}{\fnsymbol{footnote}}
\setcounter{footnote}{-1}
\numberwithin{equation}{section}


\title{Notes on locally free class groups}
\author{Chia-Fu Yu}
\address{
Institute of Mathematics, Academia Sinica and NCTS\\
6th Floor, Astronomy Mathematics Building \\
No. 1, Roosevelt Rd. Sec. 4 \\ 
Taipei, Taiwan, 10617} 
\email{chiafu@math.sinica.edu.tw}
\address{
The Max-Planck-Institut f\"ur Mathematik \\
Vivatsgasse 7, Bonn \\
Germany 53111} 
\email{chiafu@mpim-bonn.mpg.de}


\date{\today} 

\subjclass[2010]{16H20, 11R52}
\keywords{ideal class group, lattice, order}

\def\Cl{{\rm Cl}}


\begin{abstract}
A theorem of Swan states that 
the locally free class group of a maximal order in a
central simple algebra is isomorphic to a 
restricted ideal class group of the center. 
In this article we discuss this theorem and its generalization to
separable algebras for which it is more applicable to 
integral representations of finite groups. This is an expository
article with aim to introduce the topic for non-specialists.


\end{abstract}
\maketitle


\section{Introduction}
\label{sec:intro}

A more precise title of this article should  be ``Notes on 
locally free class
groups of orders in separable algebras over global fields''. 
Our goal is to introduce the locally free class group of an $R$-order
$\Lambda$ in a separable $K$-algebra. 
Here $R$ is a Dedekind domain
and $K$ is its fraction field, which we shall assume to be a global
field later. We refer to Section~\ref{sec:03} for definitions of separable
$K$-algebras and $R$-orders.
 
The notion of locally free class groups can be defined 
in a more general setting. However, since results discussed here are 
only restricted to the case where the ground field 
$K$ is a global field, we do not attempt to discuss its definition 
as general as it could be. 
Instead we shall illustrate the essential idea of this notion 
(see Section~\ref{sec:cancel}).    

After illustrating the notion, we present the main theorem on 
locally free class groups. 
We then explain 
how to deduce a theorem of Swan 
\cite[Theorem 1, p.~56]{swan:1962} from the main theorem. 
The strong approximation theorem (SAT) plays an important role in the 
proof of the main theorem, and we give a short exposition of the SAT. 
Our another goal to 
take this opportunity to introduce the reader 
(mainly for graduate students and non-specialists) some useful tools 
and results in Algebraic Number Theory and show 
how to apply them together.

\section{The cancellation law}
\label{sec:cancel}

Let us first motivate the notion of locally free class groups by the
classical theorem of Steinitz. Let $R$ be a Dedekind domain with
fraction field $K$ and we will always suppose 
that $R\neq K$. An {\it $R$-lattice} is a
finite $R$-module $M$ which does not have non-zero torsion
elements, that is, $M$ is isomorphic to a finite $R$-submodule in a
(finite-dimensional) $K$-vector space. We have the following results 
concerning the classification of $R$-lattices 
(cf. \cite[Theorem (4.13), p.~85]{curtis-reiner:1}):
\begin{enumerate}
\item Every $R$-lattice $M$ is $R$-projective, and $M\simeq
  \oplus_{i=i}^n J_i$ for some non-zero ideals $J_i$ of $R$, where $n$ is
  the $R$-rank of $M$. 
\item Two $R$-lattices  
$M=\oplus_{i=i}^n J_i$ and $M'=\oplus_{i=i}^m J'_i$
of the form in (1) are isomorphic if and only if $n=m$ and the
products $J_1\cdots J_n$ and $J_1'\cdots J_n'$ are isomorphic. 
\end{enumerate} 

From the statement (2) one can easily deduce the following 
result: If $M$ and $M'$
are two $R$-lattices, then we have
\begin{equation}
  \label{eq:cancel}
  M\oplus R\simeq M'\oplus R \iff M\simeq M' 
\end{equation}
The property (\ref{eq:cancel}) is called 
the {\it cancellation law}. 

As we learned from Algebra, a useful and easier way of studying 
rings to study their modules, 
instead of their underlying ring structures. 
Using this approach, the cancellation law then 
can be used to distinguish certain rings which share the same 
good properties.
For example, consider the quaternion 
$\Q$-algebras $B_{p,\infty}$, which are those 
ramified exactly the two places
$\{p,\infty\}$ of $\Q $, for primes $p$. Choose a maximal order 
$\Lambda(p)$ in each $B_{p,\infty}$, that is, $\Lambda(p)$ is not
strictly contained in another $\Z$-order in $B_{p,\infty}$. 
Then one can show that 
the cancellation law for ideals of $\Lambda(p)$ holds true 
if and only if $p\in 
\{2,3,5,7,13\}$. 
We will also give a proof of this fact later.     

Now let $\Lambda$ be an (not necessarily commutative) 
$R$-algebra which is finitely generated as an $R$-module. 
The above example shows that the cancellation law for (right) 
projective $\Lambda$-modules need not hold. In
Mathematics, we often encounter a situation that a nice 
property we are looking for turns out to be impossible. 
In that situation one usually remedies it by creating a 
more flexible notion so that the desired nice property remains valid 
in a slightly weaker setting. In the present case, one can for example
consider the following weaker equivalence relation: 
\begin{equation}
  \label{eq:weak}
  \text{Define\ } M\sim M' \quad \text{if $M\oplus 
   \Lambda^r\simeq M'\oplus \Lambda^r$
  for some integer $r\ge 0$.} 
\end{equation}
Then it follows from the definition
that the cancellation law holds true for this
new equivalence relation, that is, we have
\begin{equation}
  \label{eq:can1}
  M\oplus \Lambda \sim M'\oplus \Lambda \iff M\sim M'. 
\end{equation}
The modules $M$ and $M'$ satisfying the above property 
are said to be {\it stably isomorphic}. 
The reader familiar with algebraic topology 
would immediately find that the way of defining a ``stable'' notion
here is similar to that  
in the definition of stable homotopy groups. It is also similar to 
that in the definitions of stable freeness and stable rationality. 
These are reminiscent of the definition of groups $K_0$ and $K_1$ in
Algebraic $K$-theory using an inductive limit procedure.


\section{Locally free class groups}
\label{sec:03}

For the rest of this article we assume that the ground field 
$K$ is a global field; that is, $K$ is a finite separable extension of
$\Q$ or $\Fp(t)$. 
Thus, $R$ is the ring of $S$-integers of $K$ for
a finite set $S$ of places which contains all archimedean ones when
$K$ is a number field. 
Let $A$ denote a finite-dimensional separable $K$-algebra. That is,
$A$ is a finite-dimensional semi-simple $K$-algebra such that
the center $C$ of $A$ is a product of finite separable field
extensions $K_i$ of $K$. 
Recall that an {\it $R$-order} in $A$ is 
an $R$-subring of $A$ which is finitely generated as an $R$-module and
generates $A$ over $K$. 
We let $\Lambda$ denote an $R$-order in $A$.
A {\it $\Lambda$-lattice} $M$ is a $R$-torsion free finitely generated
$\Lambda$-module. \\

\npr {\bf Example.} Let $G$ be a finite group with ${\rm char} K \nmid
|G|$. Then the group algebra $A=KG$ is a separable $K$-algebra. 
We can see this by Maschke's Theorem (cf. \cite[Theorem 3.14, p.~42]
{curtis-reiner:1}): Every finite-dimensional 
representation of $G$ over $K$ is a direct sum of irreducible
representations. Then by definition $KG$ is a semi-simple
$K$-algebra. Applying Maschke's Theorem again 
to an algebraic closure $\ol
K$ of $K$, we see that the algebra $\ol K\otimes_K KG=\ol K G$ 
is also semi-simple. 
Therefore, $A$ is a separable $K$-algebra. 
Clearly, the group ring $\Lambda=RG$ is an $R$-order in
$A$, and any representation $M$ of $G$ over $R$ is a $\Lambda$-lattice. \\






For any integer $n\ge 1$, denote by ${\rm LF}_n(\Lambda)$ the set of
isomorphism classes of locally free right $\Lambda$-modules of rank
$n$. Two locally free right $\Lambda$-modules $M$ and $M'$ are said to be
{\it stably isomorphic}, denoted by $M\sim_{s}M'$, if 
\[ M\oplus \Lambda^{r} \simeq M'\oplus \Lambda^r \]
as $\Lambda$-modules for some integer $r\ge 0$. 
The stable class of $M$ will be denoted by $[M]_s$, while 
the isomorphism class is denoted by $[M]$.
By a $\Lambda$-ideal we mean a $\Lambda$-lattice
in $A$, that is, it is an $R$-lattice which is also a $\Lambda$-module. 
Let $\Cl(\Lambda)$ denote the set of 
stable classes of locally free right
$\Lambda$-ideals in $A$. The Jordan-Zassenhaus Theorem
(cf. \cite[Theorem 24.1, p.~534]{curtis-reiner:1}) states that
${\rm LF_1}(\Lambda)$ is a finite set, and hence so 
the set $\Cl(\Lambda)$ is. 
We define the group structure on $\Cl(\Lambda)$ as follows. 
Let $J$ and $J'$ be two locally free
$\Lambda$-ideals. Define
\begin{equation}
  \label{eq:group}
  [J]_s+[J']_s=[J'']_s,
\end{equation}
where $J''$ is any locally free $\Lambda$-ideal satisfying
\begin{equation}
  \label{eq:com}
   J\oplus J'=J''\oplus \Lambda
\end{equation}
as $\Lambda$-modules. Such a $\Lambda$-ideal $J''$ always exists and 
we will see this in Section~\ref{sec:04}.
The following basic lemma shows that 
$\Cl(\Lambda)$ is an abelian group, called 
the {\it locally free class group} of $\Lambda$.

\begin{lemma}
  The finite set $\Cl(\Lambda)$ with the binary operation 
  defined in (\ref{eq:group}) forms an
  abelian group.
\end{lemma}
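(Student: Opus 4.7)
The plan is to grant the existence of $J''$ (deferred to Section~\ref{sec:04}) and verify the group axioms one by one. Three of the four checks are short book-keeping with direct sums; the real difficulty lies in exhibiting inverses, which forces an appeal to a slightly stronger form of the existence statement.

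\emph{Well-definedness of $+$.} If $J\oplus J'\cong J''\oplus\Lambda\cong\tilde{J}''\oplus\Lambda$, then $J''$ and $\tilde{J}''$ are manifestly stably isomorphic. For independence of representatives, suppose $J_1\oplus\Lambda^r\cong J\oplus\Lambda^r$ and $J_1'\oplus\Lambda^s\cong J'\oplus\Lambda^s$, and pick $J_1''$ with $J_1\oplus J_1'\cong J_1''\oplus\Lambda$; then
\[
J_1''\oplus\Lambda^{r+s+1}\cong J_1\oplus J_1'\oplus\Lambda^{r+s}\cong J\oplus J'\oplus\Lambda^{r+s}\cong J''\oplus\Lambda^{r+s+1},
\]
whence $[J_1'']_s=[J'']_s$.

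\emph{Identity, commutativity, associativity.} Taking $J'=\Lambda$ we may take $J''=J$, so $[\Lambda]_s$ is an identity. Commutativity is inherited from the symmetry of $\oplus$. For associativity, given $[J_i]_s$, $i=1,2,3$, pick $M,N,M',N'$ satisfying $J_1\oplus J_2\cong M\oplus\Lambda$, $M\oplus J_3\cong N\oplus\Lambda$, $J_2\oplus J_3\cong M'\oplus\Lambda$, and $J_1\oplus M'\cong N'\oplus\Lambda$. Then
\[
N\oplus\Lambda^2\cong J_1\oplus J_2\oplus J_3\cong N'\oplus\Lambda^2,
\]
so $[N]_s=[N']_s$ and the two re-associations yield the same class.

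\emph{Inverses.} This is the main obstacle, and it rests on a slight strengthening of the existence statement, to be extracted from Section~\ref{sec:04} (ultimately via the strong approximation theorem): every locally free $\Lambda$-module of rank $n\ge 1$ is isomorphic to $J^*\oplus\Lambda^{n-1}$ for some locally free $\Lambda$-ideal $J^*$. Granting this, the projectivity of $J$ yields a locally free complement $J\oplus Q\cong\Lambda^n$ with $\rank Q=n-1$; writing $Q\cong J^*\oplus\Lambda^{n-2}$ (taking $J^*=\Lambda$ when $n=1$) produces a locally free $\Lambda$-ideal $J^*$ satisfying $J\oplus J^*\oplus\Lambda^{n-2}\cong\Lambda^n$. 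Choosing $J'''$ with $J\oplus J^*\cong J'''\oplus\Lambda$ then forces $J'''\oplus\Lambda^{n-1}\cong\Lambda^n$, hence $J'''\sim_s\Lambda$ and $[J]_s+[J^*]_s=[\Lambda]_s$, as required.
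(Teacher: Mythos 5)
Your verification of associativity and commutativity is exactly the paper's argument (compare $(J_1\oplus J_2)\oplus J_3\cong N\oplus\Lambda^2$ against both bracketings), so on that overlap the two proofs agree. Where you differ is that you make explicit three points the paper leaves implicit: well-definedness of $+$ on stable classes, the identity element, and inverses. The paper's own proof checks only commutativity and associativity and silently relies on the later identification $\nu:\Cl(\Lambda)\simeq\wh C^\times/C^\times_{+,A}N(\wh\Lambda^\times)$ from Section~\ref{sec:04} to see that one really has a group. Your inverse argument is sound, and the strengthened existence statement you invoke --- every locally free module of rank $n\ge 1$ is $J^*\oplus\Lambda^{n-1}$ for some locally free ideal $J^*$ --- is indeed what Section~\ref{sec:04} delivers: every class in $\GL_n(A)\backslash\GL_n(\wh A)/\GL_n(\wh\Lambda)$ is represented by $\varphi_{n-1}(\hat c)$ because $N:\wh A^\times\to\wh C^\times$ is surjective and, for $n\ge 2$, the reduced norm classifies the double cosets. (A small wrinkle in your complement step: for $n=1$ the module $Q$ is zero and $J\cong\Lambda$ outright, so the expression $J^*\oplus\Lambda^{n-2}$ should simply be bypassed there, as you in effect do.) The trade-off is that your proof is self-contained as a verification of the group axioms but leans on the same adelic input twice, whereas the paper's shorter check is honest only once the isomorphism $\nu$ of Theorem~\ref{2} is in hand; either way the real content sits in Section~\ref{sec:04}.
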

\begin{proof}
  By (\ref{eq:com}), the commutativity holds true.  
  We prove the associativity.
  Let $J_1,J_2,J_3$ be three locally free ideals of
  $\Lambda$. Suppose we have $[J_1]_s + [J_2]_s=[J']_s$ and $[J']_s+
  [J_3]_s=[J'']_s$. Then  
\[ (J_1 \oplus J_2)\oplus J_3 \simeq \Lambda\oplus J'\oplus J_3\simeq 
 J'' \oplus \Lambda^2. \] 
Similarly if $[J_2]_s + [J_3]_s=[G']_s$ and $[J_1]_s+
  [G']_s=[G'']_s$, then 
$J_1 \oplus (J_2\oplus J_3) \simeq G'' \oplus \Lambda^2$. This shows
$[J'']_s=[G'']_s$ and the associativity holds true. \qed 
\end{proof}
 
We introduce some more notations. Denote by $C$ the center of
$A$. One has $C=\prod^s_i K_i$ and $A=\prod^s_i A_i$, where each 
$A_i$ is a central simple algebra over $K_i$. 
For any place $v$ of $K$, let $K_v$ denote the completion of $K$ at
$v$, and $O_v$ the valuation ring if $v$ is non-archimedean. 
We also write $R_v$ for $O_v$ when  $v\not\in S$. 
Let $A_v:=K_v\otimes_K A$, $C_v:=K_v\otimes_K A$ and 
$\Lambda_v:=R_v\otimes_R \Lambda$ be the
completions of $A$, $C$ and $\Lambda$ at $v$, respectively.
By a place $w$ of $C$ we mean a place $w$ of $K_i$ for some $i$; that the
algebra $A$ splits (resp.~is ramified) at the place $w$ of $C$ 
means that $A_i$ splits (resp.~is ramified) at the place $w$. 
Let $\wh R=\prod_{v\not\in S} R_v$ be the profinite completion of $R$,
and let $\wh K=K\otimes_{R} \wh R$ be the finite 
$S$-adele ring of $K$; one also writes $\A_K^S$ for $\wh K$.  
Put $\wh A:=\wh K \otimes_K A$, $\wh C:=\wh K \otimes_K C$ 
and $\wh \Lambda:=
\wh R\otimes_R \Lambda=\prod_{v\not\in S} \Lambda_v$.

It is a basic fact that 
the set ${\rm LF}_1(\Lambda)$ is isomorphic to 
the double coset space 
$A^\times \backslash \wh A^\times/\wh \Lambda^\times$.   
There is a natural surjective map 
\begin{equation}
  \label{eq:LF1}
  {\rm LF}_1(\Lambda)\to \Cl(\Lambda)
\end{equation}
by sending $[J]\mapsto [J]_s$.  
Let $N_{A_i/K_i}: A_i\to K_i$ denote the reduced norm map. It induces a
surjective map $N_i: \wh A_i^\times \to \wh K^\times_i$ because we
have the surjectivity $A_{i}\otimes K_v\to K_{i}\otimes K_v$ 
for any finite place $v$ of $K$.
The reduced norm map $N:A=\prod_i A_i\to C=\prod_i K_i$ is simply 
defined as the product $N=(N_{A_i/K_i})_i$. Then we have a surjective map 
$N: \wh A^\times\to \wh C^\times$, and it gives rise
to surjective map (again denoted by)
\begin{equation}
  \label{eq:norm}
   N :{\rm LF}_1(\Lambda)\simeq 
A^\times \backslash \wh A^\times /\wh \Lambda^\times \to
N(A^\times)\backslash \wh C^\times /N(\wh \Lambda^\times).
\end{equation}

We will see that $N(A^\times)=C^\times_{+,A}$, where
$C^\times_{+,A}\subset C^\times$ consists of all elements 
$a\in C^\times$ 
with $r(a)>0$ for all real embeddings(places) $r$ 
which are ramified in $A$. The main theorem for 
the locally free class groups is as follows. 

\begin{thm}\label{2}
  The map (\ref{eq:norm}) factors through  ${\rm LF}_1(\Lambda)\to
  \Cl(\Lambda)$ and it 
  induces an isomorphism of finite abelian groups 
  \begin{equation}
    \label{eq:nu}
    \nu: \Cl(\Lambda)\simeq \wh K^\times /C^\times_{+,A}
  N(\wh \Lambda^\times).
  \end{equation}
\end{thm}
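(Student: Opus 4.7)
The plan is to reinterpret $\nu$ via adelic double cosets at arbitrary rank, then reduce the three claims---well-definedness on stable classes, surjectivity, and injectivity---to two main inputs: the Hasse--Schilling--Maass (HSM) norm theorem and the strong approximation theorem for $\SL_n$.

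First, I extend the adelic double-coset description of ${\rm LF}_1$ to arbitrary $n\ge 1$: choosing rational and adelic trivializations identifies ${\rm LF}_n(\Lambda)$ with $\GL_n(A)\backslash \GL_n(\wh A)/\GL_n(\wh\Lambda)$. The reduced norm extends to a multiplicative map $N:\GL_n(\wh A)\to \wh C^\times$. Applying HSM to the matrix algebra $M_n(A_i)$ over each $K_i$ gives $N(\GL_n(A))=N(A^\times)$, and a standard local argument at each place $v$ (using that $\GL_n(\Lambda_v)$ is generated modulo $\SL_n$ by $\Lambda_v^\times$ embedded in the $(1,1)$-block) yields $N(\GL_n(\wh\Lambda))\subseteq N(\wh\Lambda^\times)$. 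Passing to quotients produces a well-defined $\nu_n:{\rm LF}_n(\Lambda)\to \wh C^\times/N(A^\times)N(\wh\Lambda^\times)$ that agrees with (\ref{eq:norm}) when $n=1$. Since $N(I_r)=1$, the map is insensitive to stabilization $M\mapsto M\oplus \Lambda^r$, so it descends to stable classes; the multiplicativity of $N$ and the defining relation (\ref{eq:com}) then make the descended $\nu$ a group homomorphism on $\Cl(\Lambda)$.

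For surjectivity, the reduced norm $A_v^\times\to C_v^\times$ is onto at every finite place $v$ (classical for central simple algebras over non-archimedean local fields), so $N:\wh A^\times\to \wh C^\times$ is onto; combined with the surjection (\ref{eq:LF1}) this forces $\nu$ to be surjective. The equality $N(A^\times)=C^\times_{+,A}$ is HSM applied to each central simple factor $A_i/K_i$, rewriting the target as $\wh C^\times/C^\times_{+,A}\,N(\wh\Lambda^\times)$.

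Injectivity is the main step and the principal obstacle. Given $J,J'$ represented by $a_J,a_{J'}\in \wh A^\times$ with $\nu([J]_s)=\nu([J']_s)$, HSM produces $x\in A^\times$ and $\lambda\in \wh\Lambda^\times$ with $N(a_J)=N(x)\,N(a_{J'})\,N(\lambda)$. Stabilize to rank $2$: $J\oplus\Lambda$ and $J'\oplus\Lambda$ are represented in $\GL_2(\wh A)$ by $a_J\oplus 1$ and $A':=a_{J'}\oplus 1$. Put $\tilde g_0=\diag(x,1)\in \GL_2(A)$, $\tilde h_0=\diag(\lambda,1)\in \GL_2(\wh\Lambda)$, and $B=\tilde g_0^{-1}(a_J\oplus 1)\tilde h_0^{-1}$; a direct calculation shows $N(BA'^{-1})=1$. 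The stable isomorphism $J\oplus\Lambda\simeq J'\oplus\Lambda$ is equivalent to a factorization $B=g_1 A' h_1$ with $g_1\in \SL_2(A)$ and $h_1\in \GL_2(\wh\Lambda)\cap \SL_2(\wh A)$, i.e.\ to the membership $BA'^{-1}\in \SL_2(A)\cdot\bigl(A'\,\SL_2(\wh\Lambda)\,A'^{-1}\bigr)$. Because $\SL_2$ over each factor $A_i$ is $K_i$-isotropic (it contains nontrivial $K_i$-unipotents), the strong approximation theorem of Kneser--Platonov gives $\SL_2(A_i)\cdot U=\SL_2(\wh A_i)$ for every open subgroup $U\subseteq \SL_2(\wh A_i)$; applied to the open conjugate $A'\,\SL_2(\wh\Lambda)\,A'^{-1}$, this furnishes the required factorization. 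Crucially, stabilizing to rank $2$ replaces the possibly anisotropic $\SL_1$---for which strong approximation would demand Eichler's condition---by the always-isotropic $\SL_2$, thereby eliminating any hypothesis on $A$ from the statement; the totally definite quaternion examples recalled in Section~\ref{sec:cancel} illustrate exactly the rank-$1$ obstruction that stabilization bypasses.
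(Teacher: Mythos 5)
Your proposal is correct and follows essentially the same route as the paper: the adelic double-coset description of ${\rm LF}_n(\Lambda)$, the Hasse--Schilling norm theorem to identify $N(\GL_n(A))=C^\times_{+,A}$, the semi-local generation of $\GL_n(\Lambda_v)$ by elementary matrices and $\Lambda_v^\times$ to get $N(\GL_n(\wh\Lambda))=N(\wh\Lambda^\times)$, and strong approximation for the (automatically isotropic) norm-one subgroup in rank $\ge 2$ to detect stable isomorphism by reduced norm. Your injectivity step merely inlines the strong-approximation argument that the paper states as the bijectivity of $N_{r+1}$ in (\ref{eq:strong}) and proves in Section~\ref{sec:strong_app}.
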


We now describe the theorem of Swan on locally free class groups.  
Assume that $A$ is a central simple algebra and $\Lambda$ is a
maximal $R$-order in $A$. 
Define the ray class group $\Cl_A(R)$ of $K$ by
\[ \Cl_A(R):=I(R)/P_A(R), \] 
where $I(R)$ be the ideal group of $R$ and 
$P_A(R)$ be the subgroup generated by the principal ideals $(a)$ for
$a\in K^\times_{+,A}$. Here $K^\times_{+,A}\subset K^\times$ is the 
subgroup of $K^\times$ defined as above. 
In terms of the adelic language, the
group $\Cl_A(R)$ is nothing but the group 
$\wh K^\times/K^\times_{+,A} \wh
R^\times$.  




\begin{thm}[Swan]\label{3}
Let $K$ be a global field and $R$ the ring of $S$-integers of $K$ for
a finite set $S$ of places containing all archimedean ones. 
Let $A$ be a central simple algebra and $\Lambda$ a maximal $R$-order
in $A$. Then theres is an isomorphism of finite abelian groups 
$\Cl(\Lambda)\simeq \Cl_A(R)$.   
\end{thm}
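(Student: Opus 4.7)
The strategy is to deduce Swan's theorem directly from the main theorem (Theorem~\ref{2}) by computing $N(\wh \Lambda^\times)$ in the central simple, maximal case. Since $A$ is central simple over $K$, the center $C$ equals $K$, so Theorem~\ref{2} specializes to
\[ \Cl(\Lambda) \simeq \wh K^\times \big/ K^\times_{+,A}\, N(\wh \Lambda^\times). \]
Comparing with the adelic description $\Cl_A(R) \simeq \wh K^\times / K^\times_{+,A}\, \wh R^\times$ stated just before the theorem, the proof is reduced to establishing the equality
\[ N(\wh \Lambda^\times) = \wh R^\times \]
inside $\wh K^\times$.

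Via the product decomposition $\wh \Lambda = \prod_{v \notin S} \Lambda_v$, this in turn is equivalent to the local claim $N(\Lambda_v^\times) = R_v^\times$ at every non-archimedean place $v \notin S$. The inclusion $N(\Lambda_v^\times) \subseteq R_v^\times$ is immediate: for $u \in \Lambda_v^\times$, both $N(u)$ and $N(u^{-1})$ are integral over $R_v$ with product $1$, hence both lie in $R_v^\times$. For the reverse inclusion, I would appeal to the classification of maximal orders over the complete discretely-valued ring $R_v$: writing $A_v \simeq M_{n_v}(D_v)$ with $D_v$ a central $K_v$-division algebra, any maximal $R_v$-order in $A_v$ is conjugate to $M_{n_v}(\grM_v)$, where $\grM_v$ denotes the unique maximal $R_v$-order in $D_v$. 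Since the reduced norm of $\diag(u,1,\dots,1) \in M_{n_v}(\grM_v)^\times$ equals $N_{D_v/K_v}(u)$, surjectivity of the reduced norm on $M_{n_v}(\grM_v)^\times$ is implied by the surjectivity of $N_{D_v/K_v}\colon \grM_v^\times \onto R_v^\times$.

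This last surjectivity is the heart of the argument and the principal obstacle of the proof. It follows from the local theory of central division algebras over a non-archimedean local field: a uniformizer $\pi_{\grM_v}$ of $\grM_v$ has reduced norm equal to a uniformizer of $R_v$ up to a unit, while on residue fields the reduced norm realizes the norm map $\F_{q^{d_v}}^\times \to \F_q^\times$ for $d_v = \ind(D_v)$, which is surjective since the norm on finite-field extensions is surjective. A Hensel-type lifting then upgrades surjectivity on residue-field units to surjectivity on $\grM_v^\times$, and combining the uniformizer computation with the unit computation yields $N_{D_v/K_v}(\grM_v^\times) = R_v^\times$. Assembling these local statements gives $N(\wh \Lambda^\times) = \wh R^\times$, and Theorem~\ref{2} delivers the claimed isomorphism $\Cl(\Lambda) \simeq \Cl_A(R)$.
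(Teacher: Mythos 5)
Your proposal is correct and takes essentially the same route as the paper: both deduce the theorem from Theorem~\ref{2} by reducing to the single local identity $N(\Lambda_v^\times)=R_v^\times$ for $v\notin S$. The paper proves this local fact by embedding the ring of integers of an unramified maximal subfield $E\subset A_v$ into $\Lambda_v$ (via conjugacy of maximal orders) and using surjectivity of $N_{E/K_v}$ on units by successive approximation; your passage through $M_{n_v}(D_v)$, the maximal order of the division algebra, and the residue-field norm plus Hensel lifting is just a more explicit rendering of that same unramified-norm argument.
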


To see Theorem~\ref{3} is an immediate consequence of Theorem~\ref{2},
we just need to check that $N(\Lambda_v^\times)=R_v^\times$ for
$v\not\in S$
($\Lambda_v$ here is a maximal $R_v$-order). 
There exists a maximal subfield $E\subset A_v$ which is unramified over
$K_v$. Since any two maximal orders in $A_v$ are conjugate,
$\Lambda_v$ contains a copy of the ring of integers $O_E$ of $E$.
As $E$ is unramified over
$K_v$, the successive approximation shows that
$N_{E/K_v}(O_E^\times)=R_v^\times$. It follows that
$N(\Lambda_v^\times)=R_v^\times$. 


\begin{prop}\label{4}
  Let $B_{p,\infty}$ and $\Lambda(p)$ for primes $p$ 
  be as in Section~\ref{sec:cancel}. Then the
  cancellation law for ideals of $\Lambda(p)$ holds true if and only
  if $p\in \{2,3,5,7,13\}$.
\end{prop}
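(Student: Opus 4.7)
The plan is to combine Theorem~\ref{3} with the classical Eichler class number formula for definite quaternion algebras over $\Q$ to reduce the proposition to a short finite case check.

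First, I would observe that the cancellation law for ideals of $\Lambda(p)$ is equivalent to injectivity of the surjection ${\rm LF}_1(\Lambda(p))\to \Cl(\Lambda(p))$ from (\ref{eq:LF1}): two locally free $\Lambda(p)$-ideals $J,J'$ satisfy $J\oplus\Lambda(p)\simeq J'\oplus\Lambda(p)$ exactly when $[J]_s=[J']_s$, so cancellation means distinct isomorphism classes have distinct stable classes.

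Next, I would compute $\Cl(\Lambda(p))$ via Theorem~\ref{3}. Since $B_{p,\infty}$ is ramified at the real place, $\Q^\times_{+,A}=\Q^\times_{>0}$; as $-1\in\wh\Z^\times$, we have $\Q^\times_{+,A}\wh\Z^\times=\Q^\times\wh\Z^\times$, and therefore
\[ \Cl(\Lambda(p))\simeq \Cl_A(\Z)=\wh\Q^\times/\Q^\times\wh\Z^\times=\Cl(\Z)=1. \]
Hence ${\rm LF}_1(\Lambda(p))\to \Cl(\Lambda(p))$ is injective if and only if ${\rm LF}_1(\Lambda(p))$ is a singleton, i.e.\ the (right) class number $h(p):=|{\rm LF}_1(\Lambda(p))|$ equals $1$.

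The remaining task is to identify the primes $p$ for which $h(p)=1$. For this I would invoke Eichler's class number formula for a maximal order in the definite quaternion $\Q$-algebra $B_{p,\infty}$, which for $p>3$ reads
\[ h(p)=\frac{p-1}{12}+\frac{1}{4}\left(1-\Bigl(\tfrac{-1}{p}\Bigr)\right)+\frac{1}{3}\left(1-\Bigl(\tfrac{-3}{p}\Bigr)\right), \]
together with the trivial checks $h(2)=h(3)=1$. For $p\ge 17$ the first term already exceeds $1$, so only finitely many primes remain; direct evaluation at $p=5,7,11,13$ yields $h(p)=1$ for $p\in\{5,7,13\}$ and $h(11)=2$. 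Adjoining $p=2,3$ gives precisely the advertised list. The main obstacle is Eichler's formula itself: its derivation (via the Eichler mass identity $\sum_I 1/|u(I)|=(p-1)/24$, coming from the Tamagawa number of $B_{p,\infty}^\times$, combined with a count of the elliptic units in the right orders) is nontrivial, so it must be cited as a black box; once accepted, the rest of the argument is bookkeeping.
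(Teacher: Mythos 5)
Your proposal is correct and follows essentially the same route as the paper: reduce cancellation to injectivity of ${\rm LF}_1(\Lambda(p))\to\Cl(\Lambda(p))$, use Swan's theorem (Theorem~\ref{3}) to see that $\Cl(\Lambda(p))$ is trivial, and then apply Eichler's class number formula to determine when the class number is one. Your extra details (the identification $\Q^\times_{+,A}\wh\Z^\times=\Q^\times\wh\Z^\times$ and the explicit check at $p=11$) only make explicit what the paper leaves to the reader.
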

\begin{proof}
  The cancellation law holds if and only if the map
  ${\rm LF}_1(\Lambda(p))\to \Cl(\Lambda(p))$ is a bijection. By Swan's
  theorem, the locally free class group $\Cl(\Lambda(p))\simeq
  \Cl_{B_{p,\infty}}(\Z)$ is trivial. Thus, the cancellation law
  holds if and only if the class number
  $h(\Lambda(p))=|{\rm LF}_1(\Lambda(p))|$ is one. On the other hand
  we have the class
  number formula \cite{eichler:1938}
  \begin{equation}
    \label{eq:class_no}
    h(\Lambda(p))=\frac{p-1}{12}+\frac{1}{3}\left
  (1-\left(\frac{-3}{p}\right )\right )+\frac{1}{4}\left
  (1-\left(\frac{-4}{p}\right )\right ),
  \end{equation}
where $\left( \frac{\cdot}{p}\right )$ denotes the Legendre
symbol. From this one easily sees that $h(\Lambda(p))=1$ if 
and only if $p\in \{2,3,5,7,13\}$. \qed  
\end{proof}

For the rest of this section we give a proof of the following basic fact.

\begin{lemma}\label{5}
  Let $A$ is a separable $K$-algebra and $C$ its center.
  Then $N(A^\times)=C^\times_{+,A}$. 
\end{lemma}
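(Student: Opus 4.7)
The plan is to split along the product decomposition of $A$, dispose of the inclusion $N(A^\times)\subseteq C^\times_{+,A}$ by a direct local computation at the ramified real places, and invoke the Hasse--Schilling--Maass norm theorem for the reverse inclusion, which is the substantive direction.

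Since $A=\prod_{i=1}^s A_i$ with each $A_i$ central simple over $K_i$ and $C=\prod_i K_i$, the reduced norm factors as $N=\prod_i N_{A_i/K_i}$, and a real place of $C$ is ramified in $A$ exactly when the corresponding place of some $K_i$ is ramified in $A_i$. Hence $C^\times_{+,A}=\prod_i (K_i^\times)_{+,A_i}$ and the assertion reduces to the central simple case: I may assume $A$ is central simple of degree $n^2$ over the global field $K$, and $C=K$.

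For $N(A^\times)\subseteq K^\times_{+,A}$, let $v$ be a real place where $A$ ramifies. Then $A\otimes_K K_v\cong M_m(\H)$ for some $m\ge 1$, where $\H$ is the Hamilton quaternion algebra over $\R$. Via the isomorphism $\H\otimes_\R\C\cong M_2(\C)$, one identifies $M_m(\H)\otimes_\R\C\cong M_{2m}(\C)$, and the reduced norm of $g\in M_m(\H)$ is the determinant of its image in $M_{2m}(\C)$. A standard calculation (the Dieudonn\'e determinant) shows that this determinant lands in $\R$ and is strictly positive on units. Hence $N(a)>0$ at $v$ for every $a\in A^\times$, which gives the easy inclusion.

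The reverse inclusion $K^\times_{+,A}\subseteq N(A^\times)$ is the Hasse--Schilling--Maass theorem and is the main obstacle. The approach I would take is to produce, for each $c\in K^\times_{+,A}$, a separable subfield $E\subset A$ with $[E:K]=n$ such that $c\in N_{E/K}(E^\times)$; once such an $E$ is found, the restriction of the reduced norm of $A$ to $E^\times$ coincides with $N_{E/K}$, so $c=N(y)$ for some $y\in E^\times\subset A^\times$. The existence of an embedding $E\hookrightarrow A$ is governed by the Albert--Brauer--Hasse--Noether criterion: $E$ embeds iff at every place $v$ of $K$, every place $w\mid v$ of $E$ splits $A_v$. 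Simultaneously realising $c$ as a global norm from $E$ reduces, via Hasse's norm theorem for cyclic extensions together with weak/strong approximation, to matching local norm conditions at each place: at finite places the local reduced norm $A_v^\times\to K_v^\times$ is surjective, at complex places surjectivity is automatic, and at real places ramified in $A$ the positivity of $c$ is precisely what is required. The heart of the lemma is this combined construction of $E$ and the norm equation, and it rests on a nontrivial piece of global class field theory; this is where I would expect to spend essentially all of the work.
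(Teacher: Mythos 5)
Your overall architecture coincides with the paper's: reduce to the central simple case via the product decomposition $A=\prod_i A_i$, observe that at a real ramified place the reduced norm on $\GL_m(\H)$ is positive (the paper isolates exactly this as Lemma~\ref{7}, with your Dieudonn\'e-determinant computation as its second proof), and recognize that the substantive content is the Hasse--Schilling(--Maass) norm theorem. The difference is that the paper simply cites that theorem from Reiner in its local-global form, $N(A^\times)=\{x\in K^\times : x\in N(A_v^\times)\ \forall v\}$, after which the lemma reduces to the elementary determination of the local norm groups $N(A_v^\times)$ (all of $K_v^\times$ except $\R_+$ at real ramified places); you instead propose to reprove the theorem from class field theory.

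Your sketch of that reproof has a genuine soft spot. If you first fix a degree-$n$ splitting field $E\hookrightarrow A$ and then try to solve $N_{E/K}(y)=c$ by Hasse's norm theorem, the local conditions to be checked are that $c$ is a local norm from $E_w/K_v$ at every place, and at finite places where $E_w/K_v$ is a nontrivial extension this is a proper index condition on $c$ that is \emph{not} implied by the surjectivity of the reduced norm $A_v^\times\to K_v^\times$: surjectivity concerns norms from the algebra, not from the fixed field $E$. Concretely, for $A=\left(\frac{-1,-1}{\Q}\right)$ the field $E=\Q(i)$ splits $A$, yet $3\in\Q^\times_{+,A}$ is not a norm from $\Q(i)$; one must pass to a different splitting field adapted to $3$. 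So $E$ cannot be chosen independently of $c$, and the standard proof (Reiner, \emph{Maximal Orders}, (33.15)) builds the element and its splitting field together, using weak approximation on the coefficients of a monic degree-$n$ polynomial with constant term $(-1)^n c$ that is arranged to be irreducible and to be a local reduced characteristic polynomial at the finitely many ramified places. Your word ``simultaneously'' points in the right direction, but as written the reduction to ``matching local norm conditions'' via Hasse's norm theorem does not close; since the paper treats Hasse--Schilling as a citable black box, the cleanest repair is to do the same.
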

\begin{proof}
  Since $A=\prod_i A_i$ and $C^\times_{+,A}=\prod_i
  K^\times_{i,+,A_i}$, it suffices to show $N(A^\times)=K^\times_{+,A}$
  for any central simple $K$-algebra $A$. We can use the
  Hasse-Schilling norm theorem (the local-global principle for the
reduced norm map) to describe $N(A^\times)$:  
\[ N(A^\times)=\{x\in K^\times ; x\in N(A_v^\times) \ \forall\, v\}; \]
see \cite[(32.9) Theorem, p.~275]{reiner:mo} and \cite[(32.20)
Theorem, p.~280]{reiner:mo}). Clearly 
$N(A_v^\times)=K_v^\times$ when $v$ is complex, non-archimedean, or a
real split place for $A$. It remains to show that
if $v$ is a real ramified place for $A$, then 
one has $v(a)>0$ if and only
if $a\in N(A_v^\times)$. 
\end{proof}

\begin{lemma}\label{7}
  Let $\H$ be the real Hamilton quaternion and $n\in \bbN$. Then
  $N(\GL_n(\H))=\R_{+}$. 
\end{lemma}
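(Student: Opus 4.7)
The plan is to establish the two inclusions $N(\GL_n(\H)) \subseteq \R_+$ and $\R_+ \subseteq N(\GL_n(\H))$ separately. For the inclusion $\R_+ \subseteq N(\GL_n(\H))$, I would first note that on $\H = \Mat_1(\H)$ the reduced norm is the quaternion norm $q \mapsto q\bar{q} = |q|^2$, which gives a continuous surjection $\H^\times \onto \R_+$. Embedding $\H^\times \embed \GL_n(\H)$ via $q \mapsto \diag(q,1,\dots,1)$ and invoking multiplicativity of $N$ (together with $N(1)=1$), every positive real arises as a value of $N$ on $\GL_n(\H)$.

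For the opposite inclusion, the strategy is to exploit the fact that $\GL_n(\H)$ is connected as a real Lie group. Granting this, the continuous image $N(\GL_n(\H))$ is a connected subset of $\R^\times$, hence contained in $\R_+$ or $\R_-$; since $N(I_n) = 1 \in \R_+$, we conclude $N(\GL_n(\H)) \subseteq \R_+$.

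The main obstacle is therefore verifying that $\GL_n(\H)$ is connected, as the rest of the argument is routine. I would deduce connectedness from the quaternionic polar decomposition: every $A \in \GL_n(\H)$ factors as $A = UP$ with $U$ in the compact symplectic group $\Sp(n)$ and $P$ a positive definite quaternionic Hermitian matrix. The space of such $P$ is convex (the straight-line homotopy $P_t = tP + (1-t)I_n$ stays positive definite for $t \in [0,1]$), hence path-connected to $I_n$, while $\Sp(n)$ is a well-known connected compact Lie group; the product map then exhibits $\GL_n(\H)$ as a continuous image of the connected space $\Sp(n)\times\{P\text{ positive definite Hermitian}\}$. Combining the two inclusions yields $N(\GL_n(\H)) = \R_+$.
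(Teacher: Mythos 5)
Your proof is correct, but it takes a genuinely different route from the paper, which offers two arguments of its own: a topological one (the semi-simple elements are dense in $\GL_n(\H)$, and each lies in a maximal commutative semi-simple subalgebra isomorphic to $\C^n$, on which the reduced norm is a product of $|z_i|^2$'s) and an algebraic one (the Dieudonn\'e determinant gives $N(\GL_n(D))\subseteq N(D^\times)$ for any central division algebra $D$, so in particular $N(\GL_n(\H))\subseteq N(\H^\times)=\R_+$). You instead prove connectedness of $\GL_n(\H)$ directly via quaternionic polar decomposition $A=UP$ with $U\in\Sp(n)$ and $P$ positive definite Hermitian, and then use connectedness of the continuous image of $N$. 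Amusingly, this inverts the paper's logic: its subsequent Remark \emph{deduces} connectedness of $\GL_n(\H)$ from the lemma together with connectedness of the norm-one subgroup, whereas you establish connectedness first and get the lemma as a corollary. Your argument is self-contained modulo the quaternionic spectral theorem (needed to extract the positive square root of $A^*A$), which is standard but worth citing; the paper's Dieudonn\'e-determinant argument has the advantage of working uniformly for $\GL_n(D)$ over an arbitrary field with no topology at all. Both inclusions in your write-up are sound, including the convexity argument for the positive definite cone.
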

\begin{proof}
  We give two proofs of this result. One is topological and the other
  one is algebraic. As $\R_+=N(\H^\times)\subset N(\GL_n(\H))$, 
  it suffices to show $N(\GL_n(\H))\subset \R_{+}$.

  (1) The set $\GL_n(\H)^{ss}\subset\GL_n(\H)$
  of semi-simple elements is open and dense in the classical
  topology. By continuity it suffices to show
  $N(x)>0$ for any $x\in \GL_n(\H)^{ss}$. 
  Since such $x$ is contained in a
  maximal commutative semi-simple subalgebra, which is isomorphic to
  $\C^n$, we have $N(x)>0$.   

  (2) The algebraic proof relies on the existence of the \dieu
  (non-commutative) determinant (cf. \cite[p.~165]{curtis-reiner:1}). 
Suppose that $D$ is a central division algebra over any field $K$. 
There is a group homomorphism (called the \dieu determinant) 
\[ \det: \GL_n(D) \to D^\#, \]
where $D^\#=D^\times/[D^\times, D^\times]$. The reduced norm map $N:
D^\times \to K^\times$ gives rise to a map $\nr: D^\# \to K^\times$.
The reduced norm map $N: \GL_n(D) \to K^\times$ can be also described 
as
\[ N (a)= \nr (\det a). \quad \forall \, a\in \GL_n(D). \] 
It follows that $N(\GL_n(D))\subset
N(D^\times)$. Particularly $N(\GL_n(\H))\subset
N(\H^\times)=\R_+$. \qed 
\end{proof}

\begin{remark}
One can show a slightly stronger result that 
the Lie group $\GL_n(\H)$ is connected.
Let $G_1$ be the kernel of the reduced norm
map $N:\GL_n(\H)\to \R^\times$. Then  
$G_1=\ul G_1(\R)$ 
for a connected, semi-simple and simply connected algebraic
$\R$-group  $\ul G_1$,  and hence that $G_1$ is connected. 
Then the fibers of the reduced norm map $N$ are all
connected as they are principal homogeneous spaces under $G_1$.  
We just showed that the image of the map $N$ is also connected
(Lemma~\ref{7}). Thus, $\GL_n(\H)$ is connected.  
\end{remark}



\section{Proof of Theorem~\ref{2}}
\label{sec:04}

For any integer $n\ge 1$ and any ring $L$ not necessarily commutative,
let $\Mat_n(L)$ denote the matrix ring over $L$ and let $\GL_n(L)$
denote the group of units in $\Mat_n(L)$. 
Let $N_n:\Mat_n(A)\to C$ be the reduced
norm map, which induces a surjective homomorphism 
$N_n: \GL_n(\wh A)\to \wh C^\times$. 
For any integer $r\ge 1$, let $I_r\in \Mat_r(\Z)$ be the identity
matrix. Let $\varphi_r:\GL_n \to \GL_{n+r}$ be the morphism of
algebraic groups which sends 
\[ a \mapsto \varphi_r(a)=
\begin{pmatrix}
  a &  \\
    & I_r
\end{pmatrix}. \]

Clearly any locally free right $\Lambda$-module $M$ of rank $n$ 
is isomorphic to a $\Lambda$-submodule in $A^n$. Therefore, the set 
${\rm LF}_n(\Lambda)$ is in bijection with the set of global equivalence
classes of the genus of the standard lattice $\Lambda^n$ in $A^n$. The
latter is naturally isomorphic to $\GL_n(A)\backslash \GL_n(\wh
A)/\GL_n(\wh \Lambda)$. If $n\ge 2$, then it follows from 
the strong approximation theorem 
(see Kneser~\cite{kneser:sa} and  Prasad~\cite{prasad:sa1977}, 
also see Theorem~\ref{SAT}) 
that the induced map
\begin{equation}
  \label{eq:strong}
  N_n: \GL_n(A)\backslash \GL_n(\wh A)/\GL_n(\wh \Lambda)\isoto 
\wh C^\times /N_n(\GL_n(A))N_n(\GL_n(\wh \Lambda))
\end{equation}
is a bijection. 

\begin{lemma}\label{4.1}
  We have 
  \begin{equation}
    \label{eq:reduction}
   \wh C^\times /N_n(\GL_n(A))N_n(\GL_n(\wh \Lambda))=
  \wh C^\times /N(A^\times)N(\wh \Lambda)=\wh C^\times /C^\times_{+,A}
  N(\wh \Lambda).
  \end{equation}
\end{lemma}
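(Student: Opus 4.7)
My plan is to decompose the lemma into two statements: the second equality is immediate from Lemma~\ref{5}, while the first reduces to (a) $N_n(\GL_n(A)) = N(A^\times)$ and (b) $N_n(\GL_n(\wh\Lambda)) \subseteq N(A^\times)\cdot N(\wh\Lambda^\times)$. In each case the reverse containment is clear from the diagonal embedding $\iota\colon x\mapsto \diag(x,1,\dots,1)$, which satisfies $N_n\circ\iota = N$. For (a), I would factor $A = \prod A_i$ and apply the Hasse--Schilling--Maass local-global principle to each central simple algebra $\Mat_n(A_i)$: at a real ramified place $w$ for $A_i$ we have $\Mat_n(A_{i,w})\cong \Mat_{nd_i}(\H)$, and Lemma~\ref{7} gives $N_n(\GL_n(A_{i,w})) = \R_+$; at the remaining places $N_n$ is surjective. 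Reassembling and invoking Lemma~\ref{5} yields $N_n(\GL_n(A)) = C^\times_{+,A} = N(A^\times)$.

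For (b), given $g\in \GL_n(\wh\Lambda)$ with $c := N_n(g)\in \wh C^\times$, local surjectivity of the reduced norm produces $b\in \wh A^\times$ with $N(b) = c$. Then $g\cdot\iota(b)^{-1}$ lies in the kernel $\SL_n(\wh A)$ of $N_n$, and strong approximation for $\SL_n$ (Theorem~\ref{SAT}, valid here since $n\ge 2$) gives $\SL_n(\wh A) = \SL_n(A)\cdot \SL_n(\wh\Lambda)$. Writing $g\cdot\iota(b)^{-1} = hu$ with $h\in\SL_n(A)$ and $u\in \SL_n(\wh\Lambda)$ and rearranging produces $\iota(b) = h^{-1}\cdot(u^{-1}g)\in \GL_n(A)\cdot \GL_n(\wh\Lambda)$. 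Passing to lattices via the locally free rank-one right $\Lambda$-ideal $I = b\wh\Lambda\cap A$, this factorization forces the stable isomorphism $I\oplus \Lambda^{n-1}\cong \Lambda^n$ as right $\Lambda$-modules, since $u^{-1}g$ preserves $\wh\Lambda^n$ while $\iota(b)$ sends $\wh\Lambda^n$ to $b\wh\Lambda\oplus \wh\Lambda^{n-1}$, and the equality is implemented by the $A$-automorphism $h^{-1}$.

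The main obstacle is converting this stable freeness into the reduced-norm statement $N(b)\in N(A^\times)\cdot N(\wh\Lambda^\times)$, as opposed to the tautological inclusion $N(b)\in N(A^\times)\cdot N_n(\GL_n(\wh\Lambda))$ that one gets by naively taking reduced norms of any factorization $\iota(b) = \alpha\beta$ with $\alpha\in \GL_n(A)$, $\beta\in \GL_n(\wh\Lambda)$ (using (a) to control $N_n(\alpha)$). To close this gap I would attempt an induction on $n$, reducing an arbitrary element of $\GL_n(\wh\Lambda)$ modulo $\SL_n(\wh\Lambda)$ to a block form $\diag(u,\beta')$ with $u\in \wh\Lambda^\times$ and $\beta'\in \GL_{n-1}(\wh\Lambda)$, with the base case $n=1$ trivial; alternatively, one can try to strengthen the strong approximation step to produce directly a factorization $\iota(b)\in \GL_n(A)\cdot \iota(\wh\Lambda^\times)\cdot \SL_n(\wh\Lambda)$, from which $N(b)\in N(A^\times)\cdot N(\wh\Lambda^\times)$ is immediate.
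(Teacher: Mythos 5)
Your treatment of the second equality (via Lemma~\ref{5}) and of the global/archimedean part (a) is fine, and in fact fills in a detail the paper leaves implicit: that $N_n(\GL_n(A))=N(A^\times)$ requires applying Hasse--Schilling to $\Mat_n(A_i)$ together with Lemma~\ref{7} at the real ramified places. The problem is entirely in part (b), and you have correctly diagnosed it yourself: your argument via strong approximation only produces a factorization $\iota(b)\in \GL_n(A)\cdot\GL_n(\wh\Lambda)$ and the stable freeness of $I$, which gives the tautological inclusion $N_n(\GL_n(\wh\Lambda))\subseteq N(A^\times)\cdot N_n(\GL_n(\wh\Lambda))$ and nothing more. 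The two repairs you sketch do not close the gap: the reduction of an arbitrary $g\in\GL_n(\wh\Lambda)$ modulo $\SL_n(\wh\Lambda)$ to the block form $\diag(u,\beta')$ with $u\in\wh\Lambda^\times$ is \emph{equivalent} to the assertion $N_n(\GL_n(\wh\Lambda))=N(\wh\Lambda^\times)$ that you are trying to prove (an element $g$ admits such a reduction precisely when $N_n(g)\in N(\wh\Lambda^\times)$), and over a general ring this fails. So as written the proposal restates the claim rather than proving it.

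The missing ingredient is that the statement is purely local and uses the semi-locality of $\Lambda_v$. Since $\wh\Lambda=\prod_{v\notin S}\Lambda_v$, it suffices to show $N_n(\GL_n(\Lambda_v))=N(\Lambda_v^\times)$ for each $v$, and no global tool (in particular no strong approximation) is needed here. The paper quotes a theorem of Bass (\cite[Proposition 8.5]{swan:K_order}): because $\Lambda_v$ is a semi-local ring, $\GL_n(\Lambda_v)$ is generated by the elementary matrices $E_n(\Lambda_v)$ together with $\varphi_{n-1}(\Lambda_v^\times)$. Elementary matrices have reduced norm $1$, so $N_n(\GL_n(\Lambda_v))=N_n(\varphi_{n-1}(\Lambda_v^\times))=N(\Lambda_v^\times)$. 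This is exactly the justification your induction step needs and the only nontrivial input in the paper's proof; without invoking semi-locality (or an equivalent stable-range/$K_1$-surjectivity statement for $\Lambda_v$), the argument does not go through.
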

\begin{proof}
  We have seen in Lemma~\ref{5} that
  $N_n(\GL_n(A))=N(A^\times)=C^\times_{+,A}$. We now prove
  $N_n(\GL_n(\Lambda_v))=N(\Lambda_v^\times)$ for $v\not \in S$ since
  the statement is local.  
  The group $\GL_n(\Lambda_v)$ contains as
  a subgroup the group $E_n(\Lambda_v)$ of elementary matrices with
  values in $\Lambda_v$. Since $\Lambda_v$ is semi-local, we have a
  result of Bass \cite[Proposition 8.5]{swan:K_order} 
  that $\GL_n(\Lambda_v)$ is generated by the subgroup
  $E_n(\Lambda_v)$ and the image $\varphi_{n-1}(\GL_1(\Lambda_v))$. 
  Since
  $E_n(\Lambda_v)$ is contained in the kernel of $N$,  
  we
  have $N_n(\GL_n(\Lambda_v))=N_n(\varphi_{n-1}(\Lambda_v^\times))=
  N(\Lambda_v^\times)$. \qed
\end{proof}

For any integer $r\ge 1$, 
we say two locally free right $\Lambda$-ideals $J$ and $J'$ are 
{\it $r$-stably isomorphic} 
if $J\oplus \Lambda^r\simeq J'\oplus \Lambda^r$ as
$\Lambda$-modules. Let $\hat c\in \wh A^\times$ be an element such
that $\hat c \Lambda=J$; we have $\varphi_r(\hat c)
\Lambda^{r+1}=J\oplus \Lambda^r$.

The morphism $\varphi_r$ induces the following commutative diagram:
\begin{equation}
  \label{eq:21}
\begin{CD}
  A^\times \backslash \wh A^\times/\wh \Lambda^\times @>\varphi_r>> 
  \GL_{r+1}(A)\backslash \GL_{r+1}(\wh A)/\GL_{r+1}(\wh \Lambda) \\
  @VNVV @V N_{r+1}VV \\
  \wh C^\times/C^\times_{+,A} N(\wh \Lambda^\times) @>{\rm id}>>  \wh
  C^\times/C^\times_{+,A} N(\wh \Lambda^\times), \\
\end{CD}  
\end{equation}
where the reduced norm map $N_{r+1}$ is known be a bijection. 
Two isomorphism classes $[J]$ and $[J']$ in $A^\times \backslash \wh
A^\times/\wh \Lambda^\times$ are $r$-stably isomorphic if and only if
$\varphi_r([J])=\varphi_r([J'])$. As $N_{r+1}$ is an isomorphism, this
is equivalent to $N([J])=N([J'])$. The latter condition 
is independent of $r$. Therefore, we conclude the following statement.

\begin{lemma}\label{4.2} Let $J$ and $J'$ be two locally free right
  $\Lambda$-ideals. The following statements are equivalent.
  \begin{enumerate}
  \item $J$ and $J'$ are stably isomorphic.
  \item $J$ and $J'$ are $r$-stably isomorphic
    for some $r\ge 1$.
  \item $J$ and $J'$ are $r$-stably isomorphic
    for all $r\ge 1$.
  \item One has $N([J])=N([J'])$ in $\wh K^\times/C^\times_{+,A} N(\wh
    \Lambda^\times)$.   
  \end{enumerate}
\end{lemma}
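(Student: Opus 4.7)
The plan is to combine the commutative diagram (\ref{eq:21}) with Lemma~\ref{4.1} and the bijectivity of $N_{r+1}$ from (\ref{eq:strong}) in order to collapse all four conditions onto the single invariant $N([J])\in \wh C^\times/C^\times_{+,A}N(\wh\Lambda^\times)$. Once this is done, most of the implications will be formal, and the only substantive input is strong approximation.

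First I would dispose of the easy implications. The implication $(3)\Rightarrow(2)$ is immediate by choosing any $r\ge 1$, and $(2)\Rightarrow(1)$ follows because any $r\ge 1$ witnessing $r$-stable isomorphism also witnesses stable isomorphism in the sense of Section~\ref{sec:cancel}. For $(1)\Rightarrow(2)$, it suffices to observe that if $r=0$ is the value witnessing stable isomorphism, so that $J\simeq J'$, then tensoring with one copy of $\Lambda$ produces a $1$-stable isomorphism.

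The core step is the equivalence $(2)\Leftrightarrow(4)$, which will simultaneously yield $(4)\Rightarrow(3)$. My plan here is to translate $r$-stable isomorphism into an adelic statement: using the identification ${\rm LF}_{r+1}(\Lambda)\simeq \GL_{r+1}(A)\backslash\GL_{r+1}(\wh A)/\GL_{r+1}(\wh\Lambda)$, together with the formula $\varphi_r(\hat c)\Lambda^{r+1}=J\oplus\Lambda^r$ recorded just before diagram (\ref{eq:21}), one sees that $J$ and $J'$ are $r$-stably isomorphic precisely when $\varphi_r([J])=\varphi_r([J'])$ in the upper-right corner. Since $N_{r+1}$ is bijective for all $r\ge 1$, this is equivalent to $N_{r+1}(\varphi_r([J]))=N_{r+1}(\varphi_r([J']))$; commutativity of (\ref{eq:21}), combined with the lower map being the identity, identifies both sides with $N([J])$ and $N([J'])$ respectively. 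Thus $r$-stable isomorphism is equivalent to $N([J])=N([J'])$ for every single $r\ge 1$. Since this last condition is manifestly independent of $r$, $(2)\Rightarrow(4)$ follows from any one $r$ and $(4)\Rightarrow(3)$ follows by running the same argument for every $r\ge 1$.

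The main (and only) nontrivial ingredient is the bijectivity of $N_{r+1}$, which rests on the strong approximation theorem applied to $\GL_{r+1}$ over $A$; this is exactly where the restriction $r\ge 1$ comes from, strong approximation being unavailable for $\GL_1$. Granted that input and Lemma~\ref{4.1}, the present lemma reduces to a straightforward diagram chase, and I anticipate no further obstacle.
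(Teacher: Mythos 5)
Your proposal is correct and follows essentially the same route as the paper: the paper likewise characterizes $r$-stable isomorphism as $\varphi_r([J])=\varphi_r([J'])$ in $\GL_{r+1}(A)\backslash \GL_{r+1}(\wh A)/\GL_{r+1}(\wh \Lambda)$, uses the bijectivity of $N_{r+1}$ (from strong approximation, valid exactly for $r+1\ge 2$) together with the commutativity of diagram (\ref{eq:21}) to convert this into $N([J])=N([J'])$, and then observes that the latter condition is independent of $r$. Your handling of the easy implications among (1), (2), (3) is also consistent with the definitions in the paper.
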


Thus, the reduced norm map $N$ induces an isomorphism
\begin{equation}
  \label{eq:22}
  \nu: \Cl(\Lambda)\simeq \wh C^\times/C^\times_{+,A} N(\wh
\Lambda^\times).
\end{equation}


We now check that $\nu$ is a group
homomorphism. 
Let $J$ and $J'$ be two locally free
$\Lambda$-ideals. Let $\hat c$ and $\hat c'$ be elements in $\hat
A^\times$ such that $\hat c \Lambda=J$ and $\hat c' \Lambda=J'$.  Put
$J'':=\hat c \hat c' \Lambda$. We claim that 
\begin{itemize}
\item[(a)] $J\oplus J'\simeq J''\oplus \Lambda$ as $\Lambda$-modules; 
\item[(b)] $\nu([J]_s)\nu([J']_s)=\nu([J'']_s)$.
\end{itemize}
Statement (a) follows from 
\[ 
\begin{bmatrix}
  \hat c \hat c' & 0 \\
  0 & 1  \\
\end{bmatrix}\cdot \Lambda^2=J''\oplus \Lambda,\quad \text{and} \quad 
N_2\left ( \begin{bmatrix}
  \hat c \hat c' & 0 \\
  0 & 1  \\
\end{bmatrix} \right )=N_2\left ( \begin{bmatrix}
  \hat c  & 0 \\
  0 & \hat c'  \\
\end{bmatrix} \right )
 \]
in $\wh C^\times/C^\times_{+,A} N(\wh \Lambda^\times)$.
Statement (b) follows from
\[ \nu([J]_s)\nu([J']_s)=N([\hat c])N([\hat c'])=N([\hat c \hat
c'])=\nu([J'']_s). \]



This completes the proof of Theorem~\ref{2}. \\

\section{Strong approximation and remarks}
\label{sec:strong_app}
In this supplementary section
we give a short exposition of the strong approximation theorem 
and explain how (\ref{eq:strong}) follows immediately from this. 
We keep the notations of Section~\ref{sec:03}. In particular $K$
denotes a global field and $S$ is a nonempty finite set of 
places of $K$. 

\begin{thm}[The strong approximation theorem]\label{SAT} 
  Let $G$ be a connected, semi-simple and simply connected algebraic
  group over $K$. Suppose that 
  \begin{itemize}
  \item[($*$)] $G$ does not contain any $K$-simple
  factor $H$ such that the topological group 
  $H_S:=\prod_{v\in S} H(K_v)$ is compact.
  \end{itemize}

Then the group $G(K)$ is dense in $G(\A^S_K)$. 
\end{thm}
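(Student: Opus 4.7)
The plan is to perform the standard reductions from $G$ to an absolutely almost simple factor, dispatch the $K$-isotropic case by direct unipotent generation, and then refer to Kneser--Platonov--Prasad for the $K$-anisotropic case, which is where all the real content sits.

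The first reduction is to the case that $G$ is $K$-simple: both the hypothesis $(*)$ and the conclusion respect direct products of such factors, so it suffices to treat a single $K$-simple factor. Any such $G$ can then be written as $G = \Res_{L/K} H$ for some absolutely almost simple, simply connected group $H$ over a finite separable extension $L/K$. Under the natural identifications $G(K) = H(L)$ and $G(\A_K^S) = H(\A_L^{S_L})$, where $S_L$ denotes the places of $L$ lying above those in $S$, hypothesis $(*)$ translates into $H_{S_L}$ being non-compact. So we may assume that $G$ is absolutely almost simple over $K$ with $G_S$ non-compact.

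Next I would split into two subcases. If $G$ is $K$-isotropic, Borel--Tits gives proper opposite $K$-parabolic subgroups $P^\pm$ with unipotent radicals $U^\pm$; since $G$ is simply connected and $K$-isotropic, $G$ is generated as an algebraic $K$-group by $U^+$ and $U^-$. Strong approximation for the unipotent groups $U^\pm$, which are iterated extensions of $\Ga$, reduces to the classical density of $K$ in $\A_K^S$, so $U^\pm(K)$ is dense in $U^\pm(\A_K^S)$. Combining this with the fact that the multiplication map $U^- \times P^+ \to G$ has Zariski-dense image (the big cell), together with an openness argument for the product map at the adelic level, yields density of $G(K)$ in $G(\A_K^S)$.

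The main obstacle is the $K$-anisotropic case. Here one uses that by $(*)$ some place $v_0 \in S$ makes $G(K_{v_0})$ non-compact, which forces $G$ to be $K_{v_0}$-isotropic even though it is $K$-anisotropic; this non-compact local factor is what allows one to propagate an isotropic-type argument from $v_0$ to the rest of the $S$-adeles. A full treatment requires case-by-case structural input that depends on the absolute type of $G$ and on the arithmetic of the associated division algebras (the Kneser--Tits problem enters here), and for an expository account I would simply cite Kneser~\cite{kneser:sa} for the classical groups and Prasad~\cite{prasad:sa1977} in general rather than reproduce the argument.
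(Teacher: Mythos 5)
Your proposal is essentially correct in outline, but it takes a different route from the paper: the paper does not prove the theorem at all, it simply cites Kneser~\cite{kneser:sa} for number fields and Prasad~\cite{prasad:sa1977} for function fields, and adds the historical remark that both proofs rest on the Hasse principle $H^1(K,G)\hookrightarrow\prod_v H^1(K_v,G)$ for simply connected groups, whose last open case (type $E_8$) was settled by Chernousov in 1989. You instead reproduce the standard reduction skeleton (to $K$-simple factors, then via $\Res_{L/K}$ to absolutely almost simple groups, then the isotropic versus anisotropic dichotomy) and only defer the anisotropic core to the literature. That is a reasonable and more informative account for a reader who wants to see where the difficulty lives, though two points deserve care. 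First, your split of the references --- Kneser for ``classical groups'' and Prasad ``in general'' --- is not the right dichotomy: Kneser's Boulder article treats arbitrary simply connected groups over \emph{number fields} (modulo the then-unproved cases of the Hasse principle), while Prasad's contribution is the \emph{function field} case; the paper attributes them correctly along the number field/function field axis. Second, in the isotropic case your ``openness argument for the product map at the adelic level'' compresses a genuinely delicate step: one shows the closure of $G(K)$ is a subgroup containing $U^\pm(\A_K^S)$ and then needs that these unipotent pieces generate $G(\A_K^S)$, which uses the local Kneser--Tits property $G(K_v)=G(K_v)^+$ together with a uniform statement over almost all $v$; the big cell alone does not finish it. Neither issue is fatal for an expository sketch, and your approach buys the reader the standard architecture of the proof, whereas the paper buys brevity and an accurate pointer to where the Hasse principle enters.
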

\begin{proof}
  See Kneser~\cite{kneser:sa} when $K$ is a number field and
  Prasad~\cite{prasad:sa1977} when $K$ is a global function field. The
  results were proved upon the Hasse principle, i.e. the map
  \[ H^1(K,G) \to \prod_{v} H^1(K_v, G) \]
  is injective. The Hasse principle was known to hold for any
  simply-connected group 
  at that time except possibly for those
  of type $E_8$. The last case of type $E_8$ was 
  finally completed by Chernousov in 1989.  \qed   
\end{proof} 
 
The strong approximation theorem is a strong version of
``class number one'' result.  

\begin{cor}\label{11}
  Let $G$ be as in Theorem~\ref{SAT} satisfying the condition ($*$) 
  and assume that $S$ contains all
  archimedean places of $K$. Then for any open compact subgroup
  $U\subset G(\A^S_K)$, the double coset 
  space $G(K)\backslash G(\A^S_K)/U$ consists of a single element.   
\end{cor}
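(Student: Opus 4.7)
The plan is a one-line deduction from Theorem~\ref{SAT}: density against an open set forces the product $G(K)\cdot U$ to fill up all of $G(\A^S_K)$. The only ingredient beyond SAT is that $U$ is an open subgroup, which I use in the form that for every $g\in G(\A^S_K)$ the right coset $gU$ is an open subset of $G(\A^S_K)$, since right translation by $g$ is a homeomorphism of topological groups.

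Concretely, I would fix an arbitrary $g\in G(\A^S_K)$ and aim to show that $g$ lies in $G(K)\cdot U$. By Theorem~\ref{SAT} the image of $G(K)$, embedded diagonally via the places outside $S$, is dense in $G(\A^S_K)$; in particular the nonempty open set $gU$ must meet $G(K)$. Picking any $\gamma\in G(K)\cap gU$ and writing $\gamma=gu$ with $u\in U$ gives $g=\gamma u^{-1}\in G(K)\cdot U$, so $g$ lies in the single double coset $G(K)\cdot 1\cdot U$. Since $g$ was arbitrary, every element of $G(\A^S_K)$ belongs to this one double coset, and hence $G(K)\backslash G(\A^S_K)/U$ has exactly one element.

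There is essentially no obstacle in the argument: the entire content of the corollary is the input of Theorem~\ref{SAT}, and the passage from density to the ``class number one'' statement is purely formal. The hypothesis that $S$ contains all archimedean places is only used insofar as it matches the standing conventions on $\A^S_K$ from Section~\ref{sec:03}, ensuring that $G(\A^S_K)$ is the finite $S$-adele group in which Theorem~\ref{SAT} provides the required density; no additional analytic input (such as properties of $G(K_\infty)$) is needed.
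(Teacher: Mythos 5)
Your argument is correct and is exactly the standard formal deduction (density of $G(K)$ meets the nonempty open set $gU$, hence $G(\A^S_K)=G(K)\cdot U$) that the paper leaves implicit, stating the corollary without proof as an immediate consequence of Theorem~\ref{SAT}. Your remark on the role of the hypothesis that $S$ contains the archimedean places is also accurate: it ensures $G(\A^S_K)$ is the finite $S$-adele group, which is where open compact subgroups live.
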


Let $A$, $C$ and $R$ be as in Section~\ref{sec:03}.
Now we let $G$ and $\ul C^\times$ denote the algebraic groups 
$K$ associated to the multiplicative groups of $A$ and $C$, respectively. 
For any commutative $K$-algebra $L$, one has
\[ G(L)=(A\otimes_K L)^\times, \quad 
\ul C^\times(L)=(C\otimes_K L)^\times. \]
We denote again by $N:G\to \ul C^\times$ the homomorphism of algebraic
$K$-groups induced by the reduced norm map $N:A \to C$, and let
$G_1=\ker N$ denote reduced norm-one subgroup. It is easy to see
that the base change $G_1\otimes \ol K$ is a finite product of simple
groups of the form $\SL_{m}$, and hence $G_1$ is semi-simple and
simply connected.   

Recall that $A$ is said to satisfy 
the {\it Eichler condition with respect to $S$}, if for any simple 
factor $A_i$ of $A$ there is one place $w$ of the center $K_i$ 
over some place $v$ in $S$ such that the completion
$A_{i,w}$ at $w$ is not a {\it division} $K_{i,w}$-algebra. 
Another way to rephrase the last condition for $A_i$ is that 
the kernel of the reduced norm map 
\[ N_{A_i/A_i}: \prod_{v\in S} (A_i\otimes K_v)^\times \to \prod_{v\in
  S} (K_i\otimes K_v)^\times \]
is not compact. In other words, the algebra 
$A$ satisfies the Eichler condition
with respect to $S$ (also denote $A$=Eichler$/R$, where $R$ is the
ring of $S$-integers of $K$) if and only if 
the reduced norm-one subgroup $G_1$ satisfies the condition ($*$) in
Theorem~\ref{SAT}. 
In particular, we have the following special case of Theorem~\ref{SAT}
for $G_1$.   

\begin{thm}\label{eichler}
  Let $A$ be a separable $K$-algebra and $G_1$ the associated 
  reduced norm-one
  subgroup defined as above. 
  If $A$ satisfies the Eichler condition with respect to $S$, 
  then $G_1(K)$ is dense in $G_1(\A_K^S)$.  
\end{thm}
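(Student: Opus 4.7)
The plan is to deduce Theorem~\ref{eichler} directly from Theorem~\ref{SAT} applied to the reduced norm-one group $G_1$. The extensive setup preceding the statement already does most of the work: from the decomposition $A=\prod_i A_i$ into central simple factors one obtains
\[ G_1=\prod_i \Res_{K_i/K}\SL_1(A_i), \]
and after base change to $\ol K$ each factor becomes a product of copies of $\SL_{m_i}$ (where $m_i^2=\dim_{K_i}A_i$), which is simple and simply connected. Since Weil restriction preserves connectedness, semisimplicity, and simple-connectedness, $G_1$ is a connected, semisimple, simply connected algebraic $K$-group, so the ambient hypothesis of Theorem~\ref{SAT} is verified. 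It only remains to translate the Eichler condition on $A$ into condition $(*)$ on $G_1$.

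The $K$-simple factors of $G_1$ are precisely the Weil restrictions $H_i:=\Res_{K_i/K}\SL_1(A_i)$ for those indices with $m_i\ge 2$, and for every place $v$ of $K$,
\[ H_i(K_v)=\prod_{w\mid v}\SL_1(A_{i,w}), \]
with $w$ ranging over places of $K_i$ above $v$. Consequently $\prod_{v\in S}H_i(K_v)$ coincides with the kernel of the reduced norm map
$\prod_{v\in S}(A_i\otimes_K K_v)^\times\to \prod_{v\in S}(K_i\otimes_K K_v)^\times$
displayed in the paragraph preceding the theorem, whose noncompactness was identified there with the Eichler condition on $A_i$. Ranging over all $i$, condition $(*)$ for $G_1$ is equivalent to the Eichler condition on $A$ with respect to $S$. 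Applying Theorem~\ref{SAT} to $G_1$ then yields the density of $G_1(K)$ in $G_1(\A_K^S)$.

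The one nontrivial local fact hidden in this identification is the following dichotomy: for a central simple algebra $B$ over a local field $F$, the reduced norm-one group $\SL_1(B)$ is compact if and only if $B$ is a division algebra (at non-archimedean $F$), with the archimedean case singling out $\SL_1(\bbH)\simeq S^3$ as the unique compact reduced-norm-one group while $\SL_m(\R)$, $\SL_m(\C)$, and $\SL_m(\bbH)$ are noncompact for $m\ge 2$. This is the main obstacle if one wants a fully self-contained argument. At a non-archimedean place I would handle the ``only if'' direction by writing $B=\Mat_r(D)$ and exhibiting an unbounded elementary unipotent subgroup $\Ga(F)^{(r-1)\dim_F D}\hookrightarrow \SL_r(D)$ when $r\ge 2$, and the ``if'' direction by embedding $\SL_1(D)$ into the compact unit group of the maximal order of $D$. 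Modulo this standard local fact, the theorem is entirely immediate from Theorem~\ref{SAT}.
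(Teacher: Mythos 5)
Your proposal is correct and follows essentially the same route as the paper: the paper also obtains Theorem~\ref{eichler} as a direct specialization of Theorem~\ref{SAT} to $G_1$, after noting that $G_1\otimes\ol K$ is a product of groups $\SL_m$ (hence $G_1$ is connected, semisimple, simply connected) and identifying the Eichler condition with condition $(*)$ via the noncompactness of the local reduced-norm-one kernels. Your write-up merely makes explicit two points the paper leaves implicit, namely the Weil-restriction decomposition of $G_1$ into its $K$-simple factors and the local dichotomy that $\SL_1(B)$ is compact exactly when $B$ is a division algebra.
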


Theorem~\ref{eichler} is what we use in the proof of
Theorem~\ref{2}.
When $K$ is a number field, this is the first case of the
strong approximation theorem, proved by Eichler \cite{eichler:sa}.
Swan \cite{swan:1980} gives a more elementary proof of this theorem.   

Suppose that $A$=Eichler$/R$, and let $U$ be an open compact subgroup 
of $G(\A_K^S)=\wh A^\times$. 
We want to show that the induced surjective map 
\begin{equation}
  \label{eq:5.1}
  N:G(K)\backslash G(\A^S_K)/U \to N(G(K))\backslash \wh C^\times
/N(U)
\end{equation}
is injective. Let $\hat c\in \wh C^\times$ be an element and $\hat g\in \wh
A^\times$ with $N(\hat g)=\hat c$. Then the fiber of the class $[\hat
c]$ is  
\begin{equation}
  \label{eq:5.2}
  N^{-1}([\hat c])=G(K)\backslash G(K)G_1(\A_K^S) \hat g U/U.
\end{equation}
If $x_1,x_2\in G_1(A^S_K)$ be two elements, then 
\begin{equation}
  \label{eq:5.3}
  G(K)x_1\hat g U=G(K) x_2 \hat g U \iff 
G_1(K)x_1 (\hat g U \hat g^{-1})=G_1(K)x_2 
(\hat g U \hat g^{-1}).
\end{equation}
Thus, we get a surjective map
\begin{equation}
  \label{eq:5.4}
  G_1(K)\backslash G_1(\A^S_K)/G_1(\A^S_K)\cap \hat g U \hat g^{-1}
\to G(K)\backslash G(K)G_1(\A_K^S) \hat g U/U=N^{-1}([\hat c]).
\end{equation}
As we know the source of (\ref{eq:5.4}) consists of one element 
(Corollary~\ref{11}), the fiber $N^{-1}([c])$ also consists of one
element. This shows that (\ref{eq:strong}) (or (\ref{eq:5.1})) 
is a bijection.

We end with this article by a few remarks.
Theorem~\ref{3} was first proved by Swan 
\cite[Theorem 1, p.~56]{swan:1962}
when $K$ is a number field. Fr\"ohlich \cite[Theorem 2,
p.~118]{frohlich:crelle1975} gave another proof of Swan's theorem
using the ideles. The proof given here is the same as that of 
Fr\"ohlich and of Swan; all uses Theorem~\ref{eichler}. 
The statement for Swan's theorem over global
fields (Theorem~\ref{3}) can be found in 
Curtis-Reiner \cite[Theorem (49.32), p.~233]{curtis-reiner:2} and
Reiner \cite[(35.14) Theorem p.~313]{reiner:mo}. 
Note that in Reiner \cite[(35.14)
Theorem p.~313]{reiner:mo} there is an assumption of
$A$=Eichler$/R$ when $K$ is a global function field, but 
that is superfluous.
Notice that Prasad's theorem, though for most 
general cases, came a few years after Reiner wrote his book {\it
Maximal Orders} (published in 1975). 
This could be the reason why the result \cite[(35.14) Theorem
p.~313]{reiner:mo} is limited to those satisfying the Eichler
condition in the function field case.  
  
The updated version of Swan's Theorem (Theorem~\ref{3}) is
then presented in the later books by Curtis and Reiner. 
They also give a more general variant (Theorem~\ref{2}); see
\cite[(49.17) Theorem, p.~225]{curtis-reiner:2}. The proof of
Theorem~\ref{2} given in Curtis-Reiner \cite{curtis-reiner:2} is 
different from the original proof of Swan and Fr\"olich; it is proved 
based on Algebraic $K$-theory. This of course brings in  more insights
to the topic. Nevertheless, the original proof may be more
accessible for non-specialists as it is much shorter and also conceptual. 
A very nice exposition for the
proof of Theorem~\ref{eichler} 
can be found in Section 51 of
Curtis-Reiner~\cite{curtis-reiner:2}, 
which follows Swan~\cite{swan:1980}. The paper \cite{swan:1980}
contains some minor errors; see \cite[Appendix A]{swan:crelle1983}
for corrections and improvements.

\section*{Acknowledgments}
The manuscript is prepared while the author's stay at 
the Max-Planck-Institut f\"ur Mathematik in Bonn. 
He is grateful to the Institute for kind hospitality
and excellent working environment. The author thanks JK Yu and
Professor Ming-Chang Kang for
pointing out the references \cite{prasad:sa1977} and
\cite{swan:crelle1983} to him.   
The author is partially supported by the grants 
MoST 100-2628-M-001-006-MY4 and 103-2918-I-001-009.


\end{document}